\newtheorem{theorem}{Theorem}
\theoremstyle{plain}
\newtheorem{corollary}{Corollary}
\newtheorem{definition}{Definition}
\newtheorem{example}{Example}
\newtheorem{remark}{Remark}
\newtheorem{ass}{Assumption}[section]
\numberwithin{equation}{section}
\newcommand{\R}{\mathbb R}
\begin{document}
\title{Coupled fixed point theorems for $\phi$-contractive mixed monotone mappings in partially ordered metric spaces}
\author{Vasile Berinde}

\begin{abstract}
In this paper we extend the coupled fixed point theorems for mixed monotone operators $F:X \times X \rightarrow  X$ obtained in  [T.G. Bhaskar, V. Lakshmikantham, \textit{Fixed point theorems in partially ordered metric spaces and applications}, Nonlinear Anal. \textbf{65} (2006) 1379-1393] and [N.V. Luong and N.X. Thuan, \textit{Coupled fixed points in partially ordered metric spaces and application}, Nonlinear Anal. \textbf{74} (2011) 983-992], by weakening the involved contractive condition. An example as well an application to nonlinear Fredholm integral equations are also given in order to illustrate the effectiveness of our generalizations.
\end{abstract}
\maketitle

\pagestyle{myheadings} \markboth{Vasile Berinde} {Coupled fixed point theorems for $\phi$-contractive}

\section{Introduction and preliminaries} 

The existence of fixed points and coupled fixed points for contractive type mappings in partially ordered metric spaces has been considered recently  by  several authors: Ran  and  Reurings  \cite{Ran},  Bhaskar  and  Lakshmikantham  \cite{Bha},  Nieto  and  Lopez  \cite{Nie06}, \cite{Nie07},  Agarwal  et  al.  \cite{Aga}, Lakshmikantham and Ciric \cite{LakC}, Luong and Thuan \cite{Luong}. These results found important applications to the study of matrix equations or ordinary differential equations and integral equations, see \cite{Ran}, \cite{Bha}, \cite{Nie06}, \cite{Nie07}, \cite{Luong} and references therein.


In order to fix the framework needed to state the main result in \cite{Bha}, we remind the following notions. Let$\left(X,\leq\right)$ be a partially ordered set and  endow the product space $X \times X$ with the following partial order:$$\textnormal{for } \left(x,y\right), \left(u,v\right) \in X \times X,  \left(u,v\right) \leq \left(x,y\right) \Leftrightarrow x\geq u,  y\leq v.$$

We say that a mapping $F:X \times X \rightarrow X$ has the \textit{mixed monotone property} if $F\left(x,y\right)$ is monotone nondecreasing in $ x$ and is monotone non increasing in $y$, that is, for any $x, y \in X,$   
$$  x_{1}, x_{2} \in X,  x_{1} \leq  x_{2} \Rightarrow  F\left(x_{1},y\right) \leq  F\left(x_{2},y\right) $$
  and, respectively,
$$ y_{1},  y_{2} \in  X, y_{1} \leq y_{2} \Rightarrow F\left(x,y_{1}\right)  \geq F\left(x,y_{2}\right). $$

A pair $ \left(x,y\right) \in X \times X $ is called a \textit{coupled  fixed  point} of the mapping $F$ if                                              $$ F\left(x,y\right) = x,  F\left(y,x\right) = y.$$
The next theorem has been established  in \cite{Bha}.

\begin{theorem}[Bhaskar and Lakshmikantham \cite{Bha}]\label{th2}
	Let  $\left(X,\leq\right)$ be a partially ordered set and suppose there is a metric $d$  on $X$ such that $\left(X,d\right)$  is a complete metric space. Let $F : X \times X \rightarrow X $ be a continuous mapping having the mixed monotone property on $X$. Assume that there exists a constant $k \in \left[0,1\right)$  with                                                                                   
\begin{equation} \label{Bhas}
	d\left(F\left(x,y\right),F\left(u,v\right)\right) \leq \frac{k}{2}\left[d\left(x,u\right) + d\left(y,v\right)\right],\textnormal{ for each }x \geq u, y \leq  v.
\end{equation}  
If there exist $x_{0}, y_{0} \in X$ such that  
$$                                                                                       
x_{0} \leq F\left(x_{0},y_{0}\right)\textrm{ and }y_{0} \geq F\left(y_{0},x_{0}\right),
$$
 then there exist $x, y \in X$ such that $$x = F\left(x,y\right)\textnormal{ and }y = F\left(y,x\right).$$ 
	\end{theorem}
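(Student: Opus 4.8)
The plan is to mimic the classical Picard-type iteration, run simultaneously in both variables. First I would fix $x_0,y_0$ as in the hypothesis and define two sequences by
$$
x_{n+1}=F(x_n,y_n),\qquad y_{n+1}=F(y_n,x_n),\qquad n\ge 0 .
$$
The first claim to establish, by induction on $n$, is that $\{x_n\}$ is nondecreasing and $\{y_n\}$ is nonincreasing, i.e. $x_n\le x_{n+1}$ and $y_n\ge y_{n+1}$ for all $n$. The base case is exactly the assumption $x_0\le F(x_0,y_0)=x_1$ and $y_0\ge F(y_0,x_0)=y_1$. For the inductive step, assuming $x_{n-1}\le x_n$ and $y_{n-1}\ge y_n$, the mixed monotone property gives $F(x_{n-1},y_n)\le F(x_n,y_n)$ (monotonicity in the first variable) and $F(x_{n-1},y_{n-1})\ge F(x_{n-1},y_n)$ (monotonicity in the second variable), and chaining these yields $x_n=F(x_{n-1},y_{n-1})\le F(x_n,y_n)=x_{n+1}$; the analogous chain with the roles of the arguments swapped gives $y_n\ge y_{n+1}$. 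This is the step that requires the most care, since one must invoke both halves of the mixed monotone property in the right order.

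Next I would quantify the convergence. For $n\ge 1$ we have $x_n\ge x_{n-1}$ and $y_n\le y_{n-1}$, so the contractive condition \eqref{Bhas} applies and gives
$$
d(x_{n+1},x_n)=d\bigl(F(x_n,y_n),F(x_{n-1},y_{n-1})\bigr)\le \frac{k}{2}\bigl[d(x_n,x_{n-1})+d(y_n,y_{n-1})\bigr],
$$
and, symmetrically, $d(y_{n+1},y_n)\le \frac{k}{2}[d(x_n,x_{n-1})+d(y_n,y_{n-1})]$. Adding these and setting $\delta_n:=d(x_{n+1},x_n)+d(y_{n+1},y_n)$ yields $\delta_n\le k\,\delta_{n-1}$, hence $\delta_n\le k^n\delta_0$. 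Since $k\in[0,1)$, for $m>n$ the triangle inequality gives
$$
d(x_n,x_m)+d(y_n,y_m)\le \sum_{j=n}^{m-1}\delta_j\le \frac{k^n}{1-k}\,\delta_0\longrightarrow 0,
$$
so both $\{x_n\}$ and $\{y_n\}$ are Cauchy in $(X,d)$.

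Finally, by completeness there exist $x,y\in X$ with $x_n\to x$ and $y_n\to y$. Letting $n\to\infty$ in $x_{n+1}=F(x_n,y_n)$ and using the continuity of $F$ (together with continuity of $d$) gives $x=F(x,y)$; likewise $y=F(y,x)$. Thus $(x,y)$ is a coupled fixed point, which is the desired conclusion. I do not expect any serious obstacle beyond the bookkeeping in the monotonicity induction; the contraction estimate and the Cauchy argument are routine once the sum $d(x_n,x_m)+d(y_n,y_m)$ is used as the right potential to iterate on.
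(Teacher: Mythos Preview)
Your argument is correct and follows essentially the original proof of Bhaskar and Lakshmikantham in \cite{Bha}. One minor slip: in the inductive step you write $F(x_{n-1},y_{n-1})\ge F(x_{n-1},y_n)$, but since $y_{n-1}\ge y_n$ and $F$ is non-increasing in its second variable, the correct inequality is $F(x_{n-1},y_{n-1})\le F(x_{n-1},y_n)$; the chain $x_n=F(x_{n-1},y_{n-1})\le F(x_{n-1},y_n)\le F(x_n,y_n)=x_{n+1}$ then goes through as intended.

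The paper itself does not give a self-contained proof of Theorem~\ref{th2}; it quotes the result from \cite{Bha} and instead proves the more general Theorem~\ref{th3}, from which Theorem~\ref{th2} is recovered as the special case $\varphi(t)=t$, $\psi(t)=(1-k)t$ (via Corollary~\ref{cor1}). The paper's technique differs in spirit from yours: it lifts the problem to the product space $X^2$ equipped with the metric $d_2\bigl((x,y),(u,v)\bigr)=\tfrac12[d(x,u)+d(y,v)]$, encodes the coupled iteration as a single operator $T(x,y)=(F(x,y),F(y,x))$, and then shows that $\{T^n(Z_0)\}$ is Cauchy by contradiction, first proving $d_2(Z_{n+1},Z_n)\to 0$ and then ruling out subsequential gaps of size $\ge\epsilon$. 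Your direct approach exploits the linear contraction constant $k$ to obtain the geometric bound $\delta_n\le k^n\delta_0$ immediately, which is simpler; the paper's product-space reformulation and limit arguments are precisely what make the extension to the nonlinear $(\varphi,\psi)$-setting possible, where no such geometric rate is available.
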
        
	
As shown in \cite{Bha}, the continuity assumption of $F$ in Theorem \ref{th2} can be replaced by the following alternative condition imposed on the ambient space $X$:

\begin{ass} \label{1.1}
$X$ has the property that

(i) if a non-decreasing sequence $\{x_n\}_{n=0}^{\infty}\subset X$ converges to $x$, then $x_n\leq x$ for all $n$;

(ii) if a non-increasing sequence $\{x_n\}_{n=0}^{\infty}\subset X$ converges to $x$, then $x_n\geq x$ for all $n$;
\end{ass}
	
Bhaskar and Lakshmikantham \cite{Bha} also established uniqueness results for coupled fixed points and fixed points and illustrated these important results by proving the existence and uniqueness of the solution for a periodic boundary value problem. These results were then extended and generalized by several authors in the last five years, see  \cite{LakC}, \cite{Luong} and references therein. Amongst these generalizations, we refer to the ones obtained Luong and Thuan in \cite{Luong}, who have considered instead of \eqref{Bhas} the more general contractive condition
\begin{equation} \label{eq-Luong}
\varphi\left(	d\left(F\left(x,y\right),F\left(u,v\right)\right)\right) \leq \frac{1}{2}\varphi\left(d\left(x,u\right) + d\left(y,v\right)\right)-\psi\left(d\left(x,u\right) + d\left(y,v\right)\right)
\end{equation}
where $\varphi,\,\psi:[0,\infty)\rightarrow [0,\infty)$ are functions satisfying some appropriate conditions.

Note that for $\varphi(t)=t$ and $\psi(t)=\frac{1-k}{2}t$, with $0\leq k<1$, condition \eqref{eq-Luong} reduces to \eqref{Bhas}.

Starting from the results in \cite{Bha} and \cite{Luong}, our main aim in this paper is to obtain more general coupled fixed point theorems for mixed monotone operators $F : X \times X \rightarrow X $  satisfying a contractive condition which is significantly weaker that the corresponding conditions \eqref{Bhas} and \eqref{eq-Luong} in \cite{Bha} and \cite{Luong}, respectively. We also illustrate how our results can be applied to obtain existence and uniqueness results for integral equations under weaker assumptions than the ones in \cite{Luong}.


\section{Main results}

Let $\Phi$ denote the set of all functions $\varphi:[0,\infty)\rightarrow [0,\infty)$ satisfying 

$(i_\varphi)$ $\varphi$ is continuous and non-decreasing;

$(ii_\varphi)$ $\varphi(t)< t$ for all $t>0$,

\noindent and $\Psi$ denote the set of all functions $\psi:[0,\infty)\rightarrow [0,\infty)$ which satisfy

$(i_\psi)$ $\lim\limits_{t\rightarrow r}\psi(t)>0$ for all $r>0$ and $\lim\limits_{t\rightarrow 0+}\psi(t)=0$.

Examples of typical functions $\varphi$ and $\psi$ are given in \cite{Luong}, see also \cite{Ber07} and \cite{Rus2}.
	
The first main result in this paper is the following coupled fixed point theorem which generalizes Theorem 2.1 in \cite{Luong} and Theorem 2.1 in \cite{Bha}.

\begin{theorem}\label{th3}
	Let  $\left(X,\leq\right)$ be a partially ordered set and suppose there is a metric $d$  on $X$ such that $\left(X,d\right)$  is a complete metric space. Let $F : X \times X \rightarrow X $ be a  mixed monotone mapping for which there exist $\varphi\in \Phi$ and $\psi \in \Psi$  such that for all $x,y,u,v\in X$ with $x \geq u, y \leq  v$,
$$
\varphi\left(\frac{d\left(F\left(x,y\right),F\left(u,v\right)\right)+d\left(F\left(y,x\right),F\left(v,u\right)\right)}{2}\right) \leq 
$$                                                                       
\begin{equation} \label{Bhas1}
 \leq  \varphi\left(\frac{d\left(x,u\right) + d\left(y,v\right)}{2}\right)-\psi\left(\frac{d\left(x,u\right) + d\left(y,v\right)}{2}\right).
\end{equation}  
Suppose either

(a) $F$ is continuous or

(b) $X$ satisfy Assumption \ref{1.1}.

If there exist $x_{0}, y_{0} \in X$ such that  
\begin{equation} \label{mic}
 x_{0} \leq F\left(x_{0},y_{0}\right)\textrm{ and }y_{0} \geq F\left(y_{0},x_{0}\right),
\end{equation} or
\begin{equation} \label{mare} 
x_{0} \geq F\left(x_{0},y_{0}\right)\textrm{ and }y_{0} \leq F\left(y_{0},x_{0}\right),
\end{equation}
 then there exist $\overline{x}, \overline{y} \in X$ such that $$\overline{x} = F\left(\overline{x},\overline{y}\right)\textrm{and }\overline{y} = F\left(\overline{y},\overline{x}\right).$$ 
	\end{theorem}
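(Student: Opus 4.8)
The plan is to adapt the classical Picard-type iteration of Bhaskar and Lakshmikantham to the present, weaker contractive condition. Assume first that \eqref{mic} holds; the case \eqref{mare} is entirely symmetric, with the words ``non-decreasing'' and ``non-increasing'' interchanged throughout and the arguments of \eqref{Bhas1} swapped accordingly. Starting from $x_0,y_0$, set $x_{n+1}=F(x_n,y_n)$ and $y_{n+1}=F(y_n,x_n)$ for $n\ge 0$, and first check, by an easy induction using the mixed monotone property together with \eqref{mic}, that $\{x_n\}$ is non-decreasing and $\{y_n\}$ is non-increasing.

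Next I would control the step sizes $\delta_n:=\frac12\big(d(x_{n+1},x_n)+d(y_{n+1},y_n)\big)$. Since $x_n\ge x_{n-1}$ and $y_n\le y_{n-1}$, applying \eqref{Bhas1} with $x=x_n,\,u=x_{n-1},\,y=y_n,\,v=y_{n-1}$ and rewriting $d(F(x_n,y_n),F(x_{n-1},y_{n-1}))=d(x_{n+1},x_n)$, and similarly for the $y$-terms, gives $\varphi(\delta_n)\le\varphi(\delta_{n-1})-\psi(\delta_{n-1})\le\varphi(\delta_{n-1})$. Hence $\{\delta_n\}$ is non-increasing, so $\delta_n\to\delta\ge 0$; letting $n\to\infty$ in this inequality and using the continuity of $\varphi$ together with $(i_\psi)$, which is incompatible with $\delta>0$, forces $\delta=0$, i.e. $d(x_{n+1},x_n)\to 0$ and $d(y_{n+1},y_n)\to 0$.

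The heart of the proof, and the step I expect to be the main obstacle, is showing that $\{x_n\}$ and $\{y_n\}$ are Cauchy, since $\delta_n$ need not decay geometrically and the usual telescoping estimate is unavailable. I would argue by contradiction: if $\{x_n\}$ and $\{y_n\}$ are not both Cauchy, then there is $\varepsilon>0$ and, for each $k$, indices $m_k>n_k\ge k$ which we may take with $m_k$ minimal past $n_k$, such that $r_k:=d(x_{m_k},x_{n_k})+d(y_{m_k},y_{n_k})\ge\varepsilon$ while $d(x_{m_k-1},x_{n_k})+d(y_{m_k-1},y_{n_k})<\varepsilon$. The triangle inequality together with $\delta_n\to 0$ then gives $r_k\to\varepsilon$ and also $s_k:=d(x_{m_k+1},x_{n_k+1})+d(y_{m_k+1},y_{n_k+1})\to\varepsilon$. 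Monotonicity gives $x_{m_k}\ge x_{n_k}$ and $y_{m_k}\le y_{n_k}$, so \eqref{Bhas1} applies with $x=x_{m_k},u=x_{n_k},y=y_{m_k},v=y_{n_k}$ and yields $\varphi(s_k/2)\le\varphi(r_k/2)-\psi(r_k/2)$; passing to the limit, with $r_k/2\to\varepsilon/2>0$ and $s_k/2\to\varepsilon/2$, and using the continuity of $\varphi$ and property $(i_\psi)$ at $r=\varepsilon/2$, produces $\varphi(\varepsilon/2)\le\varphi(\varepsilon/2)-c$ for some $c>0$, a contradiction. Thus both sequences are Cauchy and, by completeness of $(X,d)$, $x_n\to\overline{x}$ and $y_n\to\overline{y}$ for some $\overline{x},\overline{y}\in X$.

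It remains to verify that $(\overline{x},\overline{y})$ is a coupled fixed point. Under hypothesis (a), continuity of $F$ gives immediately $\overline{x}=\lim_n x_{n+1}=\lim_n F(x_n,y_n)=F(\overline{x},\overline{y})$ and, symmetrically, $\overline{y}=F(\overline{y},\overline{x})$. Under hypothesis (b), Assumption \ref{1.1} yields $x_n\le\overline{x}$ and $y_n\ge\overline{y}$ for every $n$, so \eqref{Bhas1} applies with $x=\overline{x},u=x_n,y=\overline{y},v=y_n$; since $x_{n+1}\to\overline{x}$ and $y_{n+1}\to\overline{y}$, letting $n\to\infty$ and using the continuity of $d$ and of $\varphi$, the behaviour of $\psi$ near $0$, and the properties of $\varphi\in\Phi$, forces $d(\overline{x},F(\overline{x},\overline{y}))+d(\overline{y},F(\overline{y},\overline{x}))=0$, hence $\overline{x}=F(\overline{x},\overline{y})$ and $\overline{y}=F(\overline{y},\overline{x})$. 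This completes the proof.
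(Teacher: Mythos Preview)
Your proposal is correct and follows essentially the same route as the paper's own proof: Picard iteration $x_{n+1}=F(x_n,y_n)$, $y_{n+1}=F(y_n,x_n)$; monotonicity of the two sequences; $\delta_n\downarrow 0$ via \eqref{Bhas1} and $(i_\psi)$; a Cauchy-by-contradiction argument with minimal-index subsequences; and then the split into cases (a) and (b). The only cosmetic difference is that the paper packages the iteration as $T(Y)=(F(x,y),F(y,x))$ on $X^2$ with the metric $d_2(Y,V)=\tfrac12(d(x,u)+d(y,v))$ before running the same estimates, whereas you work componentwise; your Cauchy step, which introduces $s_k$ explicitly and shows $s_k\to\varepsilon$ via $\delta_n\to 0$, is in fact a bit more transparent than the paper's corresponding passage.
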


\begin{proof}
Consider the functional $d_2:X^2\times X^2 \rightarrow \mathbb{R}_{+}$ defined by
$$
d_2(Y,V)=\frac{1}{2}\left[d(x,u)+d(y,v)\right],\,\forall Y=(x,y),V=(u,v) \in X^2.
$$
It is a simple task to check that $d_2$ is a metric on $X^2$ and, moreover, that, if $(X,d)$ is complete, then $(X^2,d_2)$ is a complete metric space, too.
Now consider the operator $T:X^2\rightarrow X^2$ defined by
$$
T(Y)=\left(F(x,y),F(y,x)\right),\,\forall Y=(x,y) \in X^2.
$$
Clearly, for $Y=(x,y),\,V=(u,v)\in X^2$, in view of the definition of $d_2$, we have
$$
d_2(T(Y),T(V))=\frac{d\left(F\left(x,y\right),F\left(u,v\right)\right)+d\left(F\left(y,x\right),F\left(v,u\right)\right)}{2}
$$
and
$$
d_2(Y,V)=\frac{d\left(x,u\right) + d\left(y,v\right)}{2}.
$$
Thus, by the contractive condition \eqref{Bhas1} we obtain that $F$ satisfies the following $(\varphi,\psi)$-contractive condition: 
\begin{equation} \label{contr}
\varphi\left(d_2(T(Y), T(V))\right)\leq \varphi\left(d_2(Y,V)\right)-\psi\left(d_2(Y,V)\right),\,\forall Y\geq V \in X^2.
\end{equation} 
Assume \eqref{mic} holds (the case \eqref{mare} is similar). Then, there exists $x_0,y_0\in X$ such that
$$
x_0\leq F(x_0,y_0) \textnormal{ and } y_0\geq F(y_0,x_0). 
$$
Denote $Z_0=(x_0,y_0)\in X^2$ and consider the Picard iteration associated to $T$ and to the initial approximation $Z_0$, that is, the sequence $\{Z_n\}\subset X^2$ defined by
\begin{equation} \label{eq-3}
Z_{n+1}=T (Z_n),\,n\geq 0,
\end{equation}
with $Z_n=(x_n,y_n)\in X^2,\,n\geq 0$.

Since $F$ is mixed monotone, we have
$$
Z_0=(x_0,y_0)\leq (F(x_0,y_0), F(y_0,x_0))=(x_1,y_1)=Z_1
$$
and, by induction,
$$
Z_n=(x_n,y_n)\leq (F(x_n,y_n), F(y_n,x_n))=(x_{n+1},y_{n+1})=Z_{n+1},
$$
which shows that the mapping $T$ is monotone and the sequence $\{Z_n\}_{n=0}^{\infty}$ is non-decreasing.
Take $Y=Z_n\geq Z_{n-1}=V$ in \eqref{contr} and obtain
\begin{equation} \label{eq-4.2}
\varphi\left(d_2(T(Z_{n}),T(Z_{n-1})\right)\leq \varphi\left(d_2 (Z_n, Z_{n-1})\right)-\psi\left(d_2 (Z_n, Z_{n-1})\right),\,n\geq 1,
\end{equation}
which, in view of the fact that $\psi\geq 0$, yields
$$
\varphi\left(d_2(Z_{n+1},Z_n)\right)\leq \varphi\left(d_2 (Z_n, Z_{n-1})\right),\,n\geq 1,
$$
which, in turn, by condition $(i_\varphi)$ implies
\begin{equation} \label{eq-5}
d_2(Z_{n+1},Z_n)\leq d_2 (Z_n, Z_{n-1}),\,n\geq 1,
\end{equation}
and this shows that the sequence $\{\delta_n\}_{n=0}^{\infty}$ given by
$$
\delta_n=d_2 (Z_n, Z_{n-1}),\,n\geq 1,
$$
is non-increasing. Therefore, there exists some $\delta \geq 0$ such that
\begin{equation} \label{eq-4.1}
\lim_{n\rightarrow \infty} \delta_n=\frac{1}{2}\lim_{n\rightarrow \infty}\left[d(x_{n+1},x_n)+d(y_{n+1},y_n)\right]=\delta.
\end{equation}
We shall prove that $\delta=0$. Assume the contrary, that is, $\delta>0$. Then by letting $n\rightarrow \infty$ in \eqref{eq-4.2} we have
$$
\varphi(\delta)=\lim_{n\rightarrow \infty} \varphi(\delta_{n+1})\leq \lim_{n\rightarrow \infty} \varphi(\delta_{n})-\lim_{n\rightarrow \infty} \psi(\delta_{n})=
$$
$$
=\varphi(\delta)-\lim_{\delta_n\rightarrow \delta+} \psi(\delta_{n})<\varphi(\delta),
$$
a contradiction. Thus $\delta=0$ and hence
\begin{equation} \label{eq-4}
\lim_{n\rightarrow \infty} \delta_n=\frac{1}{2}\lim_{n\rightarrow \infty}\left[d(x_{n+1},x_n)+d(y_{n+1},y_n)\right]=0.
\end{equation}

We now prove that $\{Z_n\}_{n=0}^{\infty}$ is a Cauchy sequence in  $(X^2,d_2)$, that is, $\{x_n\}_{n=0}^{\infty}$ and $\{y_n\}_{n=0}^{\infty}$ are Cauchy sequences in $(X,d)$. Suppose, to the contrary, that at least one of the sequences $\{x_n\}_{n=0}^{\infty}$, $\{y_n\}_{n=0}^{\infty}$ is not a Cauchy sequence. Then there exists an $\epsilon>0$ for which we can find subsequences $\{x_{n(k)}\}$, $\{x_{m(k)}\}$ of $\{x_n\}_{n=0}^{\infty}$ and $\{y_{n(k)}\}$, $\{y_{m(k)}\}$ of $\{y_n\}_{n=0}^{\infty}$ with $n(k)>m(k)\geq k$ such that
\begin{equation} \label{eq-6}
\frac{1}{2}\left[d(x_{n(k)},x_{m(k)})+d(y_{n(k)},y_{m(k)})\right]\geq \epsilon,\,k=1,2,\dots.
\end{equation}
Note that we can choose $n(k)$ to be the smallest integer with property $n(k)>m(k)\geq k$ and satisfying \eqref{eq-6}. Then
\begin{equation} \label{eq-5.1}
d(x_{n(k)-1},x_{m(k)})+d(y_{n(k)-1},y_{m(k)})< \epsilon.
\end{equation}
By \eqref{eq-6} and \eqref{eq-5.1} and the triangle inequality we have
$$
\epsilon\leq r_k:=\frac{1}{2}\left[d(x_{n(k)},x_{m(k)})+d(y_{n(k)},y_{m(k)})\right]\leq
$$
$$
\frac{d(x_{n(k)},x_{n(k)-1})+d(y_{n(k)},y_{n(k)-1})}{2}+\frac{d(x_{n(k)-1},x_{m(k)})+d(y_{n(k)-1},y_{m(k)})}{2}
$$
$$ 
\leq \frac{d(x_{n(k)},x_{n(k)-1})+d(y_{n(k)},y_{n(k)-1})}{2}+\epsilon.
$$
Letting $k\rightarrow \infty$ in the above inequality and using \eqref{eq-4} we get
\begin{equation} \label{eq-7.1}
\lim_{k\rightarrow \infty} r_k:=\lim_{k\rightarrow \infty} \frac{1}{2}\left[d(x_{n(k)},x_{m(k)})+d(y_{n(k)},y_{m(k)})\right]=\epsilon.
\end{equation}
Since $n(k)>m(k)$, we have $x_{n(k)}\geq x_{m(k)}$ and $y_{n(k)}\leq y_{m(k)}$ and hence by \eqref{Bhas1}
$$
\varphi\left(r_{k+1}\right)=\varphi\left(\frac{1}{2}	\left[d\left(F\left(x_{n(k)},y_{n(k)}\right),F\left(x_{m(k)},y_{m(k)}\right)\right)+\right.\right.
$$
$$
 \left.\left.+d\left(F\left(y_{m(k)},x_{m(k)}\right),F\left(y_{n(k)},x_{n(k)}\right)\right)\right]\right) \leq \varphi\left(r_{k}\right)-\psi\left(r_{k}\right).
$$
Letting $k\rightarrow \infty$ in the above inequality and using \eqref{eq-7.1} we get
$$
\varphi(\epsilon)=\varphi(\epsilon)-\lim_{k\rightarrow \infty} \psi\left(r_{k}\right)= \varphi(\epsilon)-\lim_{r_k\rightarrow \epsilon+} \psi\left(r_{k}\right)<\varphi(\epsilon),
$$
a contradiction. This shows that $\{x_n\}_{n=0}^{\infty}$ and $\{y_n\}_{n=0}^{\infty}$ are indeed Cauchy sequences in the complete metric space $(X,d)$.

This implies there exist $\overline{x},\overline{y}$ in $X$ such that
$$
\overline{x} =\lim_{n\rightarrow \infty} x_n \textnormal{ and } \overline{y} =\lim_{n\rightarrow \infty} y_n.
$$
Now suppose that assumption (a) holds. Then
$$\overline{x} = \lim_{n\rightarrow \infty} x_{n+1}=\lim_{n\rightarrow \infty} F(x_n,y_n)=F\left(\overline{x},\overline{y}\right)
$$ 
and
$$
\overline{y} = \lim_{n\rightarrow \infty} y_{n+1}=\lim_{n\rightarrow \infty} F(y_n,x_n)=F\left(\overline{y},\overline{x}\right),
$$
which shows that $(\overline{x},\overline{y})$ is a coupled fixed point of $F$.

Suppose now assumption (b) holds. Since  $\{x_n\}_{n=0}^{\infty}$ is a non-decreasing sequence that converges to $\overline{x}$, we have that $x_n\leq \overline{x}$ for all $n$. Similarly, $y_n\geq \overline{y}$ for all $n$.

Then

$$
d(\overline{x},F(\overline{x},\overline{y}))\leq d(\overline{x},x_{n+1})+d(x_{n+1},F(\overline{x},\overline{y}))=d(\overline{x},x_{n+1})+
$$
$$
+d(F(x_n,y_n),F(\overline{x},\overline{y}))
$$
and
$$
d(\overline{y},F(\overline{y},\overline{x}))\leq d(\overline{y},y_{n+1})+d(y_{n+1},F(\overline{y},\overline{x}))=d(\overline{y},y_{n+1})+
$$
$$
+d(F(y_n,x_n),F(\overline{y},\overline{x})).
$$
So
$$
d(\overline{x},F(\overline{x},\overline{y}))- d(\overline{x},x_{n+1})\leq d(F(x_n,y_n),F(\overline{x},\overline{y}))
$$
and
$$
d(\overline{y},F(\overline{y},\overline{x}))- d(\overline{y},y_{n+1})\leq d(F(y_n,x_n),F(\overline{y},\overline{x}))
$$
and hence
$$
\frac{1}{2}\left[d(\overline{x},F(\overline{x},\overline{y}))- d(\overline{x},x_{n+1})+d(\overline{y},F(\overline{y},\overline{x}))- d(\overline{y},y_{n+1})\right]\leq
$$
$$
\leq \frac{1}{2}\left[d(F(x_n,y_n),F(\overline{x},\overline{y}))+d(F(y_n,x_n),F(\overline{y},\overline{x}))\right]
$$
which imply, by the monotonicity of $\varphi$ and condition \eqref{Bhas1},
$$
\varphi\left(\frac{1}{2}\left[d(\overline{x},F(\overline{x},\overline{y}))- d(\overline{x},x_{n+1})+d(\overline{y},F(\overline{y},\overline{x}))- d(\overline{y},y_{n+1})\right]\right)\leq
$$
$$
\leq \varphi\left( \frac{1}{2}\left[d(F(x_n,y_n),F(\overline{x},\overline{y}))+d(F(y_n,x_n),F(\overline{y},\overline{x}))\right]\right)\leq
$$
$$
\leq \varphi\left(\frac{d(x_n,\overline{x})+d(y_n,\overline{y})}{2}\right)-\psi\left(\frac{d(x_n,\overline{x})+d(y_n,\overline{y})}{2}\right).
$$
Letting now $n\rightarrow \infty$ in the above inequality, we obtain
$$
\varphi\left(\frac{d(\overline{x},F(\overline{x},\overline{y}))+d(\overline{y},F(\overline{y},\overline{x}))}{2}\right)\leq
\varphi\left(0\right)-\psi\left(0\right)=0,
$$
which shows, by $(ii_{\varphi})$, that $d(\overline{x},F(\overline{x},\overline{y}))=0$ and $d(\overline{y},F(\overline{y},\overline{x}))=0$. 

\end{proof}

 \begin{remark} \em
Theorem \ref{th3} is more general than Theorem 2.1 in \cite{Luong} and Theorem \ref{th2} (i.e., Theorem 2.1 in \cite{Bha}), since the contractive condition \eqref{Bhas1} is more general than \eqref{Bhas} and \eqref{eq-Luong}, a fact which is clearly illustrated by the next example.
 \end{remark}
 
\begin{example} \label{ex1} \em
Let $X=\mathbb{R},$ $d\left(x,y\right)=|x-y|$ and $F:X\times X \rightarrow X$ be defined by 
$$
F\left(x,y\right)=\frac{x-2y}{4}, \,(x,y)\in X^2.
$$ 
Then $F$ is mixed monotone and satisfies condition \eqref{Bhas1}  but  does not satisfy neither condition \eqref{eq-Luong} nor \eqref{Bhas}. 

Indeed, assume there exist $\varphi\in \Phi$ and $\psi\in \Psi$,  such that \eqref{eq-Luong} holds. This means that for all $x,y,u,v\in X$ with $\,x\geq u,\,y\leq v,$
$$
\left|\frac{x-2y}{4}-\frac{u-2v}{4}\right| \leq \frac{1}{2}\varphi\left(\left|x-u\right|+\left|y-v\right|\right)-\psi\left(\left|x-u\right|+\left|y-v\right|\right),
$$
which, in view of $(ii_{\varphi})$  yields, for $x=u$ and $y< v$,
$$
\frac{1}{2}\left|y-v\right| \leq \frac{1}{2}\varphi\left(\left|y-v\right|\right)-\psi\left(\left|y-v\right|\right)\leq \frac{1}{2}\varphi\left(\left|y-v\right|\right)<\frac{1}{2}\left|y-v\right|,
$$
a contradiction. Hence $F$ does not satisfy \eqref{eq-Luong}.

Now we prove that \eqref{Bhas1} holds. Indeed, since we have
$$
\left|\frac{x-2y}{4}-\frac{u-2v}{4}\right| \leq \frac{1}{4}\left|x-u\right|+\frac{1}{2}\left|y-v\right|,\,x\geq u,\,y\leq v,
$$
and
$$
\left|\frac{y-2x}{4}-\frac{v-2u}{4}\right| \leq \frac{1}{4}\left|y-v\right|+\frac{1}{2}\left|x-u\right|,\,x\geq u,\,y\leq v,
$$
by summing up the two inequalities above we get exactly  \eqref{Bhas1} with $\varphi(t)=t$ and $\psi(t)=\frac{1}{4} t$. Note also that $x_0=-2,\,y_0=3$ satisfy \eqref{mic}.

So  by our Theorem \ref{th3} we obtain that $F$ has a (unique) coupled fixed point $(0,0)$ but neither Theorem 2.1 in \cite{Luong} nor Theorem 2.1 in \cite{Bha} do not apply to $F$ in this example.
\end{example} 

\begin{remark} \em
Note also that Theorem 2.1 in \cite{Luong} has been proved under the additional very sharp condition on $\varphi$:
$$
(iii_{\varphi}) \,\,\varphi(s+t) \leq \varphi(s)+\varphi(t),\,\forall s,t \in [0,\infty),
$$
while our proof is independent of this assumption.
\end{remark}

\begin{corollary} \label{cor1}
	Let  $\left(X,\leq\right)$ be a partially ordered set and suppose there is a metric $d$  on $X$ such that $\left(X,d\right)$  is a complete metric space. Let $F : X \times X \rightarrow X $ be a  mixed monotone mapping for which there exists  $\psi \in \Psi$  such that for all $x,y,u,v\in X$ with $x \geq u, y \leq  v$,
$$
d\left(F\left(x,y\right),F\left(u,v\right)\right)+d\left(F\left(y,x\right),F\left(v,u\right)\right) \leq 
$$                                                                       
\begin{equation} \label{Bhas2}
 \leq  d\left(x,u\right) + d\left(y,v\right)-2 \psi\left(\frac{d\left(x,u\right) + d\left(y,v\right)}{2}\right).
\end{equation}  
Suppose either

(a) $F$ is continuous or

(b) $X$ satisfy Assumption \ref{1.1}.

If there exist $x_{0}, y_{0} \in X$ such that either \eqref{mic} or \eqref{mare} are satisfied, 
 then there exist $\overline{x}, \overline{y} \in X$ such that $$\overline{x} = F\left(\overline{x},\overline{y}\right)\textrm{and }\overline{y} = F\left(\overline{y},\overline{x}\right).$$ 

\end{corollary}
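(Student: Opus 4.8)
The plan is to deduce Corollary \ref{cor1} directly from Theorem \ref{th3} by choosing the comparison functions suitably. The natural first move is to divide the contractive inequality \eqref{Bhas2} by $2$, which rewrites it as exactly \eqref{Bhas1} with $\varphi(t)=t$. The one point that needs care is that $\varphi(t)=t$ is \emph{not} admissible, since membership in $\Phi$ demands the strict inequality $\varphi(t)<t$ for $t>0$ in $(ii_\varphi)$. This is circumvented by a harmless rescaling.

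Concretely, I would set $\varphi(t)=\tfrac t2$ and $\psi^{*}(t)=\tfrac12\,\psi(t)$, and first check that these are legitimate. Indeed $\varphi$ is continuous, non-decreasing and satisfies $\varphi(t)=\tfrac t2<t$ for every $t>0$, so $\varphi\in\Phi$; and since $\lim_{t\to r}\psi^{*}(t)=\tfrac12\lim_{t\to r}\psi(t)>0$ for all $r>0$ while $\lim_{t\to 0+}\psi^{*}(t)=0$, we have $\psi^{*}\in\Psi$.

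Next I would verify that hypothesis \eqref{Bhas2} of the corollary implies hypothesis \eqref{Bhas1} of Theorem \ref{th3} for this pair $(\varphi,\psi^{*})$. Writing $S=d(F(x,y),F(u,v))+d(F(y,x),F(v,u))$ and $A=d(x,u)+d(y,v)$, condition \eqref{Bhas2} says $S\le A-2\psi(A/2)$, so, dividing by $4$,
$$
\varphi\!\left(\frac S2\right)=\frac S4\le\frac A4-\frac12\psi\!\left(\frac A2\right)=\varphi\!\left(\frac A2\right)-\psi^{*}\!\left(\frac A2\right)
$$
for all $x\ge u,\ y\le v$, which is precisely \eqref{Bhas1}. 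All the remaining ingredients of Theorem \ref{th3} — completeness of $(X,d)$, mixed monotonicity of $F$, the alternative (a) or (b), and the existence of $x_0,y_0$ satisfying \eqref{mic} or \eqref{mare} — are assumed verbatim in the corollary, so Theorem \ref{th3} applies and produces $\overline x,\overline y\in X$ with $\overline x=F(\overline x,\overline y)$ and $\overline y=F(\overline y,\overline x)$, as claimed.

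Since the argument is a one-line specialization of Theorem \ref{th3} once the rescaling is in place, there is no serious obstacle; the only subtlety is the admissibility of the chosen $\varphi$, which is exactly why I take $\varphi(t)=t/2$ rather than the naive $\varphi(t)=t$. (Alternatively, one may observe that $(ii_\varphi)$ enters the proof of Theorem \ref{th3} only at its final step — to pass from an inequality of the form $\varphi(\cdot)\le 0$ to the vanishing of $d(\overline x,F(\overline x,\overline y))$ and $d(\overline y,F(\overline y,\overline x))$ — and for $\varphi=\mathrm{id}$ that implication is immediate, so the proof of Theorem \ref{th3} also goes through verbatim with $\varphi(t)=t$; I prefer the rescaling as it stays strictly within the stated hypotheses.)
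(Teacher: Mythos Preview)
Your argument is correct and follows the same route as the paper, namely specializing Theorem~\ref{th3} with $\varphi$ essentially the identity; the paper simply takes $\varphi(t)=t$ without comment, whereas you notice that this choice fails $(ii_\varphi)$ and repair it by the harmless rescaling $\varphi(t)=t/2$, $\psi^{*}=\psi/2$. So your proof is in fact a bit more careful than the paper's one-line derivation, while being the same idea.
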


\begin{proof}
Taking $\varphi(t)=t$,  $t\in [0,\infty)$, condition \eqref{Bhas1} reduces to \eqref{Bhas2} and hence by Theorem \ref{th3} we get Corollary \ref{cor1}.
\end{proof}

\begin{remark} \em
If we take $\psi(t)=\left(1-\frac{k}{2}\right) t$, $t\in [0,\infty)$, with $0\leq k <1$, by Corollary \ref{cor1} we obtain a generalization of Theorem \ref{th2} ( Theorem 2.1 in \cite{Bha}). 
\end{remark}

\begin{remark} \em
Let us note that, as suggested by Example \ref{ex1},  since the contractivity condition \eqref{Bhas1} is valid  only for comparable elements in  $X^2$, Theorem \ref{th3} cannot guarantee in general the uniqueness of the coupled fixed point. 
\end{remark}

It is therefore our interest now to provide additional conditions to ensure that the coupled fixed point in Theorem \ref{th3} is in fact unique. Such a condition is the one used in Theorem 2.2 of Bhaskar and Lakshmikantham \cite{Bha} or in Theorem 2.4 of Luong and Thuan  \cite{Luong}:

\textnormal{every pair of elements in} $X^2$ \textnormal{has either a lower bound or an upper bound},
which is known, see \cite{Bha}, to be equivalent to the following condition: for all $Y=(x,y),\,\overline{Y}=(\overline{x},\overline{y})\in X^2$,
\begin{equation} \label{eq-7}
\exists Z=(z_1,z_2)\in X^2 \textnormal{ that is comparable to }  Y \textnormal{ and } \overline{Y}.
\end{equation}

\begin{theorem} \label{th4}
In addition to  the  hypotheses  of Theorem \ref{th3}, suppose that condition  \eqref{eq-7} holds. Then $F$ has a
unique coupled fixed point.
\end{theorem}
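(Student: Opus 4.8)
The plan is to run the argument entirely in the reformulated setting of the proof of Theorem~\ref{th3}. Recall that coupled fixed points of $F$ are precisely the fixed points of the operator $T:X^2\to X^2$, $T(x,y)=(F(x,y),F(y,x))$, that $T$ is monotone with respect to the product order on $X^2$, that $(X^2,d_2)$ is complete, and that $T$ satisfies the $(\varphi,\psi)$-contractive inequality \eqref{contr} for every comparable pair $Y\ge V$ in $X^2$. Let $\overline{Y}=(\overline{x},\overline{y})$ be the coupled fixed point produced by Theorem~\ref{th3} and let $Y^{*}=(x^{*},y^{*})$ be an arbitrary coupled fixed point; both are fixed points of $T$, so it suffices to show $\overline{Y}=Y^{*}$.

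I would split the argument according to whether $\overline{Y}$ and $Y^{*}$ are comparable. If they are, applying \eqref{contr} directly gives $\varphi(d_2(\overline{Y},Y^{*}))=\varphi(d_2(T\overline{Y},TY^{*}))\le \varphi(d_2(\overline{Y},Y^{*}))-\psi(d_2(\overline{Y},Y^{*}))$, whence $\psi(d_2(\overline{Y},Y^{*}))\le 0$; since $\psi\ge 0$, property $(i_{\psi})$ forces $d_2(\overline{Y},Y^{*})=0$, exactly as in the treatment of $\delta$ in the proof of Theorem~\ref{th3}. If they are not comparable, I invoke \eqref{eq-7} to pick $Z=(z_1,z_2)\in X^2$ comparable to both $\overline{Y}$ and $Y^{*}$, and I consider the Picard orbit $\{T^{n}Z\}$. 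Since $T$ is monotone and $\overline{Y},Y^{*}$ are fixed by $T$, comparability is inherited along the orbit: $T^{n}Z$ is comparable to $T^{n}\overline{Y}=\overline{Y}$ and to $T^{n}Y^{*}=Y^{*}$ for every $n\ge 0$.

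Next I would show $d_2(T^{n}Z,\overline{Y})\to 0$. Setting $a_n=d_2(T^{n}Z,\overline{Y})=d_2(T^{n}Z,T^{n}\overline{Y})$ and applying \eqref{contr} to the comparable pair $(T^{n-1}Z,\overline{Y})$, the facts $\psi\ge 0$ and $(i_{\varphi})$ give $a_n\le a_{n-1}$, just as \eqref{eq-5} was derived; hence $a_n\downarrow a\ge 0$. If $a>0$, letting $n\to\infty$ in $\varphi(a_n)\le\varphi(a_{n-1})-\psi(a_{n-1})$ and using $(i_{\psi})$ (the role played by $\delta_n$ in Theorem~\ref{th3}) yields $\varphi(a)\le\varphi(a)-\lim_{a_n\to a+}\psi(a_n)<\varphi(a)$, a contradiction, so $a=0$. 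Repeating the argument with $Y^{*}$ in place of $\overline{Y}$ gives $d_2(T^{n}Z,Y^{*})\to 0$, and then the triangle inequality $d_2(\overline{Y},Y^{*})\le d_2(\overline{Y},T^{n}Z)+d_2(T^{n}Z,Y^{*})\to 0$ forces $\overline{Y}=Y^{*}$, i.e. $\overline{x}=x^{*}$ and $\overline{y}=y^{*}$.

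I expect the only delicate point to be the limit passage involving $\psi$ (when an orbit-distance sequence stabilizes at a positive value), but this is handled exactly as in the proof of Theorem~\ref{th3}, leaning on $(i_{\psi})$ together with the nonnegativity of $\psi$, so that the inequality \eqref{contr} degenerates to $\varphi(a)\le\varphi(a)$ only when $a=0$. Everything else — the monotonicity of $T$ in the product order, the inheritance of comparability by $\{T^{n}Z\}$, and the monotone-sequence bookkeeping — is routine and was already carried out in the proof of Theorem~\ref{th3}.
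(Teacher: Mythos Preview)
Your proof is correct and follows essentially the same route as the paper's: pick an element $Z$ comparable to both coupled fixed points, iterate with $T$, and use the $(\varphi,\psi)$-contraction together with $(i_\psi)$ to force $d_2(T^nZ,\overline{Y})\to 0$ and $d_2(T^nZ,Y^*)\to 0$. The only cosmetic difference is that you phrase everything in the $(T,d_2)$ framework and separate out the already-comparable case, whereas the paper writes out the coordinate formulas for $\Delta_n=\frac{d(\overline{x},u_n)+d(\overline{y},v_n)}{2}$ directly and treats all cases at once via the intermediate point.
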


\begin{proof} From Theorem \ref{th3}, the set of coupled fixed points of $F$ is nonempty. Assume that $Z^*=(x^*,y^*)\in X^2$ and $\overline{Z}=(\overline{x},\overline{y})$ are two coupled fixed point of $F$. We shall prove that $Z^*=\overline{Z}$.

By assumption \eqref{eq-7}, there exists $(u,v)\in X^2$ that is comparable to $(x^*,y^*)$ and $(\overline{x},\overline{y})$. We define the sequences $\{u_n\}$, $\{v_n\}$ as follows:
$$
u_0=u,\,v_0=v,\,u_{n+1}=F(u_n,v_n),\,v_{n+1}=F(v_n,u_n),\,n\geq 0.
$$
Since $(u,v)$ is comparable to $(\overline{x},\overline{y})$, we may assume $(\overline{x},\overline{y})\geq (u,v)=(u_0,v_0)$. By the proof of Theorem \ref{th3} we obtain inductively
\begin{equation} \label{eq-10}
(\overline{x},\overline{y})\geq (u_n,v_n), \, n\geq 0
\end{equation}
and therefore, by \eqref{Bhas1},
$$
\varphi\left(\frac{d(\overline{x},u_{n+1})+d(\overline{y},v_{n+1})}{2}\right)=
$$
$$
=\varphi\left(\frac{d(F(\overline{x},\overline{y}), F(u_n,v_n))+d(F(\overline{y}, \overline{x}),F(v_n,u_n))}{2}\right)
$$
\begin{equation} \label{eq-11}
\leq \varphi\left(\frac{d(\overline{x},u_{n})+d(\overline{y},v_{n})}{2}\right)-\psi\left(\frac{d(\overline{x},u_{n})+d(\overline{y},v_{n})}{2}\right),
\end{equation}
which, by the fact that $\psi\geq 0$, implies
$$
\varphi\left(\frac{d(\overline{x},u_{n+1})+d(\overline{y},v_{n+1})}{2}\right)\leq \varphi\left(\frac{d(\overline{x},u_{n})+d(\overline{y},v_{n})}{2}\right).
$$
Thus, by the monotonicity of $\varphi$, we obtain that the sequence $\{\Delta_n\}$ defined by
$$
\Delta_n=\frac{d(\overline{x},u_{n})+d(\overline{y},v_{n})}{2},\,n\geq 0,
$$
is non-increasing. Hence, there exists $\alpha\geq 0$ such that $\lim\limits_{n\rightarrow \infty} \Delta_n=\alpha$.

We shall prove that $\alpha=0$. Suppose, to the contrary, that $\alpha>0$. Letting $n\rightarrow \infty$ in \eqref{eq-11}, we get
$$
\varphi(\alpha) \leq \varphi(\alpha)-\lim_{n\rightarrow \infty} \psi(\Delta_n)=\varphi(\alpha)-\lim_{\Delta_n\rightarrow \alpha+} \psi(\Delta_n)<\varphi(\alpha).
$$
a contradiction. Thus $\alpha=0$, that is,
$$
\lim_{n\rightarrow \infty} \frac{d(\overline{x},u_{n})+d(\overline{y},v_{n})}{2}=0,
$$
which implies
$$
\lim_{n\rightarrow \infty} d(\overline{x},u_{n})=\lim_{n\rightarrow \infty}d(\overline{y},v_{n})=0.
$$
Similarly, we obtain that
$$
\lim_{n\rightarrow \infty} d(x^*,u_{n})=\lim_{n\rightarrow \infty}d(y^*,v_{n})=0,
$$
and hence $\overline{x}=x^*$ and $\overline{y}=y^*$.
\end{proof}

\begin{corollary} \label{cor2}
In addition to  the  hypotheses  of Corollary \ref{cor1}, suppose that condition  \eqref{eq-7} holds. Then $F$ has a
unique coupled fixed point.
\end{corollary}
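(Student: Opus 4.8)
The plan is to obtain Corollary~\ref{cor2} as the specialization $\varphi(t)=t$ of Theorem~\ref{th4}, exactly as Corollary~\ref{cor1} was obtained from Theorem~\ref{th3}. First I would observe, as in the proof of Corollary~\ref{cor1}, that with $\varphi(t)=t$ for $t\in[0,\infty)$ the contractive condition \eqref{Bhas1} becomes precisely \eqref{Bhas2}. Hence the hypotheses of Corollary~\ref{cor1} together with condition \eqref{eq-7} are a particular instance of the hypotheses of Theorem~\ref{th4}, and the conclusion --- existence of a unique coupled fixed point --- follows at once.

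The only point deserving a word of care is that the identity function $t\mapsto t$ satisfies $(i_\varphi)$ (it is continuous and non-decreasing) but not the strict inequality in $(ii_\varphi)$. This is harmless: an inspection of the proof of Theorem~\ref{th4} shows that $(ii_\varphi)$ is never used there --- only the monotonicity of $\varphi$ (to conclude that $\{\Delta_n\}$ is non-increasing) and the limit properties $(i_\psi)$ of $\psi$ (to force $\alpha=0$) enter the argument. Therefore the reasoning of Theorem~\ref{th4} applies verbatim when $\varphi$ is taken to be the identity.

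For completeness one may also argue directly. By Corollary~\ref{cor1}, $F$ has at least one coupled fixed point. Given two of them, $Z^*=(x^*,y^*)$ and $\overline{Z}=(\overline{x},\overline{y})$, use \eqref{eq-7} to pick $(u,v)\in X^2$ comparable to both, set $u_0=u$, $v_0=v$, $u_{n+1}=F(u_n,v_n)$, $v_{n+1}=F(v_n,u_n)$, and use the mixed monotone property (as in the proof of Theorem~\ref{th3}) to propagate the comparability, say $(\overline{x},\overline{y})\ge (u_n,v_n)$ for all $n$. Plugging $x=\overline{x}$, $y=\overline{y}$, $u=u_n$, $v=v_n$ into \eqref{Bhas2} shows that $\Delta_n:=\tfrac12\bigl[d(\overline{x},u_n)+d(\overline{y},v_n)\bigr]$ is non-increasing, hence $\Delta_n\to\alpha$ for some $\alpha\ge 0$; passing to the limit in \eqref{Bhas2}, if $\alpha>0$ we would obtain $2\alpha\le 2\alpha-2\lim_{\Delta_n\to\alpha+}\psi(\Delta_n)<2\alpha$, a contradiction, so $\alpha=0$ and thus $u_n\to\overline{x}$, $v_n\to\overline{y}$. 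Running the same computation with $Z^*$ in place of $\overline{Z}$ yields $u_n\to x^*$, $v_n\to y^*$, and the triangle inequality forces $\overline{x}=x^*$, $\overline{y}=y^*$.

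There is no genuine obstacle here: the statement is a routine corollary, and the only subtlety --- that $\varphi(t)=t$ lies on the boundary of the admissible class $\Phi$ --- is dispatched by the observation that the uniqueness argument never invokes $(ii_\varphi)$.
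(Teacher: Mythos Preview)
Your proposal is correct and follows the paper's own (implicit) route: the paper gives no separate proof for Corollary~\ref{cor2}, treating it as the immediate specialization $\varphi(t)=t$ of Theorem~\ref{th4}, just as Corollary~\ref{cor1} was obtained from Theorem~\ref{th3}. Your extra remark that $(ii_\varphi)$ is never invoked in the uniqueness argument of Theorem~\ref{th4} is a valid and useful observation that the paper does not spell out.
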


An alternative uniqueness condition is given in the next theorem.

\begin{theorem} \label{th6}
In addition to the hypotheses of  Theorem \ref{th3}, suppose that $x_0,y_0 \in X$ are comparable. Then $F$ has a unique fixed point, that is, there exists $\overline{x}$ such that $F(\overline{x},\overline{x})=\overline{x}$.  
\end{theorem}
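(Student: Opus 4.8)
The plan is to derive Theorem~\ref{th6} from Theorem~\ref{th3} by showing that, once $x_0$ and $y_0$ are comparable, the coupled fixed point produced by that theorem must lie on the diagonal of $X^2$. By Theorem~\ref{th3}, the assumptions already yield a coupled fixed point $(\overline x,\overline y)$, which (assuming \eqref{mic}; the case \eqref{mare} is symmetric) is the limit of the Picard iterates $Z_n=(x_n,y_n)=T^n(Z_0)$, $Z_0=(x_0,y_0)$, with $x_n\to\overline x$ and $y_n\to\overline y$. It therefore suffices to prove $\overline x=\overline y$; setting $\overline x:=\overline y$ then gives the desired $F(\overline x,\overline x)=\overline x$. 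Uniqueness is addressed at the end.

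First I would show that comparability of $x_0$ and $y_0$ is preserved along the iteration. Assume without loss of generality $x_0\le y_0$. Using the mixed monotone property of $F$ twice, $x_1=F(x_0,y_0)\le F(y_0,y_0)\le F(y_0,x_0)=y_1$, and by induction $x_n\le y_n$ for every $n\ge 0$. Consequently, in the product order on $X^2$ we have $(x_n,y_n)\le (y_n,x_n)$ for all $n$, so the pair $Y=(y_n,x_n)$, $V=(x_n,y_n)$ is comparable and the $(\varphi,\psi)$-contractive condition \eqref{contr} (equivalently \eqref{Bhas1}) applies to it. Since $d_2\big((y_n,x_n),(x_n,y_n)\big)=\tfrac12\big[d(y_n,x_n)+d(x_n,y_n)\big]=d(x_n,y_n)$ and, similarly, $d_2\big(T(y_n,x_n),T(x_n,y_n)\big)=d(x_{n+1},y_{n+1})$, condition \eqref{Bhas1} yields
$$
\varphi\big(d(x_{n+1},y_{n+1})\big)\le \varphi\big(d(x_n,y_n)\big)-\psi\big(d(x_n,y_n)\big),\qquad n\ge 0.
$$

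From here the argument is the one already used twice in the proof of Theorem~\ref{th3}: since $\psi\ge 0$ and $\varphi$ is non-decreasing, $\gamma_n:=d(x_n,y_n)$ is non-increasing, hence $\gamma_n\to\gamma$ for some $\gamma\ge 0$; if $\gamma>0$, letting $n\to\infty$ and using property $(i_\psi)$ gives $\varphi(\gamma)\le\varphi(\gamma)-\lim_{\gamma_n\to\gamma+}\psi(\gamma_n)<\varphi(\gamma)$, a contradiction, so $\gamma=0$. As $x_n\to\overline x$ and $y_n\to\overline y$, continuity of $d$ forces $d(\overline x,\overline y)=\lim_n\gamma_n=0$, i.e. $\overline x=\overline y$, which proves the existence of a point $\overline x$ with $F(\overline x,\overline x)=\overline x$. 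For uniqueness I would argue as in the proof of Theorem~\ref{th4}: if $z^\ast$ also satisfies $F(z^\ast,z^\ast)=z^\ast$, then $(z^\ast,z^\ast)$ is a coupled fixed point; one chooses $(u_0,v_0)$ comparable to both $(\overline x,\overline x)$ and $(z^\ast,z^\ast)$, forms the sequence $u_{n+1}=F(u_n,v_n)$, $v_{n+1}=F(v_n,u_n)$, and shows, exactly as above, that $\tfrac12[d(\overline x,u_n)+d(\overline x,v_n)]\to 0$ and $\tfrac12[d(z^\ast,u_n)+d(z^\ast,v_n)]\to 0$, whence $z^\ast=\overline x$. The step I expect to be the main obstacle is producing that common comparable element $(u_0,v_0)$: the comparability of $x_0$ and $y_0$ alone does not supply one, so at this point one must invoke a condition of type \eqref{eq-7} (as in Theorem~\ref{th4}) or exploit some further structure of the problem — everything else is a routine repetition of the estimates already established for Theorem~\ref{th3}.
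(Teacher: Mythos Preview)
Your existence argument is essentially the paper's own: assume \eqref{mic}, use comparability of $x_0,y_0$ and the mixed monotone property to propagate $x_n$ versus $y_n$ comparability along the Picard iteration, then plug $Y=(x_n,y_n)$, $V=(y_n,x_n)$ (or the swap) into \eqref{Bhas1} to obtain $\varphi(d(x_{n+1},y_{n+1}))\le\varphi(d(x_n,y_n))-\psi(d(x_n,y_n))$, and let $n\to\infty$ to force $d(\overline x,\overline y)=0$. The only cosmetic difference is that the paper skips your intermediate step ``$\gamma_n$ is non-increasing, so it converges'' and instead uses directly that $d(x_n,y_n)\to d(\overline x,\overline y)$ by continuity of $d$ (since Theorem~\ref{th3} already gives $x_n\to\overline x$, $y_n\to\overline y$); this avoids having to deduce $\gamma_{n+1}\le\gamma_n$ from $\varphi(\gamma_{n+1})\le\varphi(\gamma_n)$ when $\varphi$ is merely non-decreasing.

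On uniqueness you have been more careful than the paper itself: the paper's proof of Theorem~\ref{th6} in fact establishes only the existence of $\overline x$ with $F(\overline x,\overline x)=\overline x$ and does not argue uniqueness at all, so the word ``unique'' in the statement is not actually proved there. Your observation that a genuine uniqueness claim would need a hypothesis of type \eqref{eq-7} (as in Theorem~\ref{th4}) is exactly right and identifies a gap in the paper's formulation rather than in your argument.
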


\begin{proof}
Assume we are in the case \eqref{mic}, that is 
$$
 x_{0} \leq F\left(x_{0},y_{0}\right)\textrm{ and }y_{0} \leq F\left(y_{0},x_{0}\right).
$$
Since $x_0,y_0$ are comparable, we have $x_0\leq y_0$ or $x_0\geq y_0$. Suppose we are in the second case. Then, by the mixed monotone property of $F$, we have
$$
 x_{1} =F\left(x_{0},y_{0}\right) \leq F\left(y_{0},x_{0}\right)=y_1,
$$
and, hence, by induction one obtains
\begin{equation} \label{eq-13}
x_n\geq y_n,\,n\geq 0.
\end{equation}
Now, since
$$
\overline{x}=\lim_{n\rightarrow\infty} F(x_n,y_n) \textnormal{ and } \overline{y}=\lim_{n\rightarrow\infty} F(y_n,x_n),
$$
by the continuity of the distance $d$, one has
$$
d(\overline{x},\overline{y})=d(\lim_{n\rightarrow\infty} F(x_n,y_n),\lim_{n\rightarrow\infty} F(y_n,x_n))=\lim_{n\rightarrow\infty} d(F(x_n,y_n), F(y_n,x_n))
$$
$$
=\lim_{n\rightarrow\infty} d(x_{n+1},y_{n+1}).
$$
On the other hand, by taking $Y=(x_n,y_n),\,V=(y_n,x_n)$ in \eqref{Bhas1} we have
$$
\varphi(d(F(x_n,y_n),F(y_n,x_n)))\leq \varphi( d(x_n,y_n))-\psi( d(x_n,y_n)),\,n\geq 0,
$$
which actually means
$$
\varphi(d(x_{n+1},y_{n+1}))\leq  \varphi( d(x_n,y_n))-\psi( d(x_n,y_n)),\,n\geq 0.
$$
Suppose $\overline{x}\neq \overline{y}$, that is $d(\overline{x},\overline{y})>0$. Taking the limit as $n\rightarrow\infty$ in the previous inequality, we get
$$
\varphi(d(\overline{x},\overline{y}))=\lim_{n\rightarrow\infty} \varphi (d(x_{n+1},y_{n+1}))\leq \varphi(d(\overline{x},\overline{y}))-\lim_{n\rightarrow\infty} \psi( d(x_n,y_n)),
$$
or 
$$
\lim_{d(x_n,y_n)\rightarrow d(\overline{x},\overline{y})} \psi( d(x_n,y_n))\leq 0,
$$
which contradicts $(i_{\psi})$. Thus $\overline{x}= \overline{y}$.
\end{proof}

\begin{remark} \em
Note that in \cite{Bha} and \cite{Luong} the authors use only condition \eqref{mic}, although the alternative assumption \eqref{mare} is also acceptable.
\end{remark}

\section{Application to integral equations}
As an application of the (coupled) fixed point theorems established in Section 2 of our paper, we study the existence and uniqueness of the solution to a Fredholm nonlinear integral equation.

In order to compare our  results  to the ones in \cite{Luong}, we shall consider the same integral equation, that is,
\begin{equation}\label{3.1}
x(t)=\int_{a}^{b} \left(K_1(t,s)+K_2(t,s)\right)\left(f(s,x(s))+g(s,x(s))\right)ds+h(t),
\end{equation}
$t\in I=[a,b].$

Let $\Theta$ denote the set of all functions $\theta:[0,\infty)\rightarrow [0,\infty)$ satisfying 

$(i_\theta)$ $\theta$ is non-decreasing;

$(ii_\theta)$ There exists $\psi\in \Psi$ such that $\theta(r)=\frac{r}{2}-\psi\left(\frac{r}{2}\right)$, for all $r\in [0,\infty)$. 
\smallskip

As shown in \cite{Luong}, $\Theta$ is nonempty, as $\theta_1(r)=k r$ with $0\leq 2 k<1$; $\theta_2(r)=\frac{r^2}{2(r+1)}$; and $\theta_3(r)=\frac{r}{2}-\frac{\ln(r+1)}{2}$, are all elements of $\Theta$.

Like in \cite{Luong}, we assume that the functions $K_1,K_2,f, g$  fulfill the following conditions:
\begin{ass}

$(i)$ $K_1(t,s)\geq 0$ and $K_2(t,s)\leq 0$, for all $t,s\in I$;

$(ii)$ There exist the positive numbers $\lambda, \mu$, such that for all $x,y\in\R$, with $x\geq y$, the following Lipschitzian type conditions hold:
\begin{equation}\label{3.3}
   0\leq f(t,x)-f(t,y)\leq \lambda \theta(x-y) 
\end{equation}
and
\begin{equation}\label{3.4}
    -\mu \theta(x-y)\leq g(t,x)-g(t,y)\leq 0;
\end{equation}
$(iii)$ 
\begin{equation}\label{3.4-1}
(\lambda+\mu)\cdot \sup_{t\in I}\int_{a}^{b}\left[K_1(t,s)-K_2(t,s)\right]ds \leq 1.
\end{equation}
\end{ass}

\begin{definition}
\em (\cite{Luong}) A pair  $(\alpha, \beta)\in X^2$ with $X=C(I,\mathbb{R})$ is called a \emph{coupled lower-upper solution} of equation (\ref{3.1})  if, for all $t\in I$,
$$
  \alpha(t)\leq  \int_{a}^{b}K_1(t,s)\left[f(s,\alpha(s))+g(s,\beta(s))\right]ds+
$$
$$
+\int_{a}^{b}K_2(t,s)\left[f(s,\beta(s))+g(s,\alpha(s))\right]ds+h(t)
$$
and
$$
 \beta(t)\geq  \int_{a}^{b}K_1(t,s)\left[f(s,\beta(s))+g(s,\alpha(s))\right]ds+
$$
$$
+\int_{a}^{b}K_2(t,s)\left[f(s,\alpha(s))+g(s,\beta(s))\right]ds+h(t),
$$

\end{definition}

\begin{theorem} \label{th3.1}
Consider the integral equation \eqref{3.1} with 
$$
K_1,K_2 \in C(I\times I,\mathbb{R}) \textnormal{ and } h\in C(I,\mathbb{R}).
$$
Suppose that there exists a coupled lower-upper solution of \eqref{3.1} and that Assumption 3.1 is satisfied. Then the integral equation \eqref{3.1} has a unique solution in $C(I,\mathbb{R})$.
\end{theorem}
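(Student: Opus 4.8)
The plan is to rewrite the integral equation \eqref{3.1} as a coupled fixed point problem on the space $X=C(I,\R)$ and then to invoke Corollary \ref{cor1} together with Corollary \ref{cor2}. Endow $X$ with the metric $d(x,y)=\sup_{t\in I}|x(t)-y(t)|$, so that $(X,d)$ is a complete metric space, and with the pointwise partial order $x\leq y\Leftrightarrow x(t)\leq y(t)$ for all $t\in I$. With this order $X$ satisfies Assumption \ref{1.1} (uniform convergence forces pointwise convergence, so a non-decreasing sequence lies below its limit and a non-increasing one above it), and, since for any $x,x',y,y'\in X$ the functions $\max\{x,x'\}$ and $\min\{y,y'\}$ belong to $X$, the element $(\max\{x,x'\},\min\{y,y'\})$ is an upper bound, in the product order, of any two given elements of $X^2$; hence condition \eqref{eq-7} also holds. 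Now define $F:X\times X\to X$ by
$$
F(x,y)(t)=\int_a^b K_1(t,s)\big[f(s,x(s))+g(s,y(s))\big]\,ds+\int_a^b K_2(t,s)\big[f(s,y(s))+g(s,x(s))\big]\,ds+h(t);
$$
by continuity of $K_1,K_2,h$ (and of $f$ and $g$, as assumed in \cite{Luong}) this is a well-defined map into $C(I,\R)$, and clearly $F(x,x)(t)$ is exactly the right-hand side of \eqref{3.1}, so that $x\in X$ solves \eqref{3.1} if and only if $F(x,x)=x$.

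I would next record the two order-theoretic hypotheses of Corollary \ref{cor1}. The sign conditions $K_1\geq 0$, $K_2\leq 0$ of Assumption 3.1$(i)$, combined with the fact that \eqref{3.3} makes $f(s,\cdot)$ non-decreasing and \eqref{3.4} makes $g(s,\cdot)$ non-increasing, show that $F$ is mixed monotone: increasing the first argument increases the $K_1$-integrand and, after multiplication by the non-positive $K_2$, increases the $K_2$-integrand too, while increasing the second argument has the opposite effect. To produce a pair satisfying \eqref{mic}, let $(\alpha,\beta)$ be the coupled lower-upper solution of \eqref{3.1} assumed to exist; its two defining inequalities read precisely $\alpha\leq F(\alpha,\beta)$ and $\beta\geq F(\beta,\alpha)$, so we may take $x_0=\alpha$ and $y_0=\beta$.

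The substantial step is the verification of the contractive condition \eqref{Bhas2}. Fix $x,y,u,v\in X$ with $x\geq u$ and $y\leq v$. For each $s\in I$ one has $x(s)-u(s)\geq 0$ and $v(s)-y(s)\geq 0$, and \eqref{3.3}--\eqref{3.4} bound the four increments $f(s,x(s))-f(s,u(s))$, $g(s,y(s))-g(s,v(s))$, $f(s,y(s))-f(s,v(s))$, $g(s,x(s))-g(s,u(s))$, in absolute value, by $\lambda$ or $\mu$ times $\theta(x(s)-u(s))$ or $\theta(v(s)-y(s))$, and with signs which, together with $K_1\geq 0\geq K_2$, force $F(x,y)(t)-F(u,v)(t)\geq 0$. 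Replacing $\theta(x(s)-u(s))$ and $\theta(v(s)-y(s))$ by the larger quantity $\theta\big(d(x,u)+d(y,v)\big)$ (here $(i_\theta)$, the monotonicity of $\theta$, is used), grouping the $K_1$ and $-K_2$ contributions, and invoking \eqref{3.4-1}, one obtains
$$
d\big(F(x,y),F(u,v)\big)\leq(\lambda+\mu)\,\theta\big(d(x,u)+d(y,v)\big)\,\sup_{t\in I}\int_a^b\big[K_1(t,s)-K_2(t,s)\big]\,ds\leq\theta\big(d(x,u)+d(y,v)\big),
$$
and the same computation applied to $F(y,x)$ and $F(v,u)$ gives $d\big(F(y,x),F(v,u)\big)\leq\theta\big(d(x,u)+d(y,v)\big)$ as well. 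Adding these and using $(ii_\theta)$ in the form $2\theta(r)=r-2\psi(r/2)$ yields exactly \eqref{Bhas2} with the function $\psi\in\Psi$ associated to $\theta$.

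Finally, Corollary \ref{cor1} now applies and provides a coupled fixed point of $F$, and since \eqref{eq-7} holds, Corollary \ref{cor2} guarantees that this coupled fixed point $(\overline{x},\overline{y})$ is unique. Because $F(\overline{x},\overline{y})=\overline{x}$ and $F(\overline{y},\overline{x})=\overline{y}$, the pair $(\overline{y},\overline{x})$ is also a coupled fixed point, so by uniqueness $\overline{x}=\overline{y}$; writing $\overline{u}=\overline{x}=\overline{y}$ we get $F(\overline{u},\overline{u})=\overline{u}$, i.e. $\overline{u}$ solves \eqref{3.1}, and it is the only solution because any solution $w$ yields the coupled fixed point $(w,w)$, which must coincide with $(\overline{u},\overline{u})$. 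I expect the only genuinely delicate point to be the pointwise sign-and-monotonicity bookkeeping in the proof of \eqref{Bhas2}; the rest is a routine translation of the hypotheses of Corollaries \ref{cor1} and \ref{cor2} into the function-space setting.
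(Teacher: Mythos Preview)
Your proof is correct and follows essentially the same route as the paper: the same function space, the same operator $F$, the same mixed-monotonicity verification, and the same reduction to Corollaries~\ref{cor1} and~\ref{cor2} via the estimate leading to \eqref{Bhas2}. The only noteworthy difference is in the last step: the paper deduces $\overline{x}=\overline{y}$ from the extra assumption $\alpha\leq\beta$ (invoking, in effect, Theorem~\ref{th6}), whereas you obtain it more cleanly from the observation that $(\overline{y},\overline{x})$ is again a coupled fixed point, hence equals $(\overline{x},\overline{y})$ by uniqueness --- this avoids any comparability hypothesis on the lower-upper solution.
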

\begin{proof}
Consider on $X=C(I,\mathbb{R})$ the natural partial order relation, that is, for $x,y\in C(I,\mathbb{R})$
$$
x\leq y\Leftrightarrow x(t)\leq y(t),\,\forall t\in I.
$$
It is well known that $X$ is a complete metric space with respect to the sup metric
$$
d(x,y)=\sup_{t\in I}\left|x(t)-y(t)\right|,\,x,y\in C(I,\mathbb{R}).
$$
Now define on $X^2$ the following partial order: for $(x,y),(u,v)\in X^2$,
$$
(x,y)\leq (u,v)\Leftrightarrow x(t)\leq u(t), \textnormal{ and } y(t)\geq v(t)\,\forall t\in I.
$$
Obviously, for any $(x,y)\in X^2$, the functions $\max\{x ,y\}$, $\min\{x ,y\}$ are the upper and lower bounds of $x,y$, respectively.

Therefore, for every $(x,y),(u,v)\in X^2$, there exists the element $(\max\{x ,y\}, \min\{x ,y\})$ which is comparable to $(x,y)$ and $(u,v)$.

Define now the mapping $F:X\times X\rightarrow X$ by
$$
F(x,y)(t)=\int_{a}^{b}K_1(t,s)\left[f(s,x(s))+g(s,y(s))\right]ds+
$$
$$
+\int_{a}^{b}K_2(t,s)\left[f(s,y(s))+g(s,x(s))\right]ds+h(t), \,\textnormal{ for all }t\in I.
$$
It is not difficult to prove, like in \cite{Luong}, that $F$ has the mixed monotone property. Now for $x,y,u,v \in X$ with $x\geq u$ and $y\leq v$, we have
$$
d(F(x,y),F(u,v))=\sup_{t\in I}\left|F(x,y)(t)-F(u,v)(t)\right|=
$$
$$
=\sup_{t\in I}\left|\int_{a}^{b}K_1(t,s)\left[f(s,x(s))+g(s,y(s))\right]ds+\right.
$$
$$
+\int_{a}^{b}K_2(t,s)\left[f(s,y(s))+g(s,x(s))\right]ds-
$$
$$
-\int_{a}^{b}K_1(t,s)\left[f(s,u(s))+g(s,v(s))\right]ds-
$$
$$
\left.-\int_{a}^{b}K_2(t,s)\left[f(s,v(s))+g(s,u(s))\right]ds\right|=
$$
$$
=\sup_{t\in I}\left|\int_{a}^{b}K_1(t,s)\left[f(s,x(s))-f(s,u(s))+g(s,y(s))-g(s,v(s))\right]ds+\right.
$$
$$
\left.+\int_{a}^{b}K_2(t,s)\left[f(s,y(s))-f(s,v(s))+g(s,x(s))-g(s,u(s))\right]ds\right|=
$$
$$
=\sup_{t\in I}\left|\int_{a}^{b}K_1(t,s)\left[(f(s,x(s))-f(s,u(s)))-(g(s,v(s))-g(s,y(s)))\right]ds\right.
$$
$$
\left.-\int_{a}^{b}K_2(t,s)\left[(f(s,v(s))-f(s,y(s)))-(g(s,x(s))-g(s,u(s)))\right]ds\right|\leq
$$
$$
\leq \sup_{t\in I}\left|\int_{a}^{b}K_1(t,s)\left[\lambda \theta(x(s)-u(s))+\mu \theta(v(s)-y(s))\right]ds-\right.
$$
\begin{equation} \label{3.2}
\left.-\int_{a}^{b}K_2(t,s)\left[\lambda \theta(v(s)-y(s))+\mu \theta(x(s)-u(s))\right]ds\right|.
\end{equation}
Since the function $\theta$ is non-decreasing and $x\geq u$ and $y\leq v$, we have
$$
\theta(x(s)-u(s))\leq \theta(\sup_{t\in I}\left|x(t)-u(t)\right|=\theta(d(x,u))
$$
and
$$
\theta(v(s)-y(s))\leq \theta(\sup_{t\in I}\left|v(t)-y(t)\right|=\theta(d(v,y)),
$$
hence by \eqref{3.2}, in view of the fact that $K_2(t,s)\leq 0$, we obtain
$$
d(F(x,y),F(u,v))\leq \sup_{t\in I}\left|\int_{a}^{b}K_1(t,s)\left[\lambda \theta(d(x,u))+\mu \theta(d(v,y))\right]ds-\right.
$$
$$
\left.-\int_{a}^{b}K_2(t,s)\left[\lambda \theta(d(v,y)+\mu \theta(d(x,u))\right]ds\right|=
$$
$$
=\left[\lambda \theta(d(x,u))+\mu \theta(d(v,y))\right]\cdot \sup_{t\in I}\left|\int_{a}^{b}\left[K_1(t,s)-K_2(t,s)\right]ds\right|=
$$
\begin{equation} \label{3.5}
=\left[\lambda \theta(d(x,u))+\mu \theta(d(v,y))\right]\cdot \sup_{t\in I}\int_{a}^{b}\left[K_1(t,s)-K_2(t,s)\right]ds,
\end{equation}
since $K_2(t,s)\leq 0$. 
Similarly, we obtain
$$
d(F(y,x),F(v,u))\leq
$$
\begin{equation} \label{3.6}
=\left[\lambda \theta(d(v,y))+\mu \theta(d(x,u))\right]\cdot \sup_{t\in I}\int_{a}^{b}\left[K_1(t,s)-K_2(t,s)\right]ds.
\end{equation}
By summing \eqref{3.5} and \eqref{3.6} we get, by using \eqref{3.4-1},
$$
\frac{d(F(x,y),F(u,v))+d(F(y,x),F(v,u))}{2}\leq (\lambda+\mu)\cdot 
$$
$$
\cdot \sup_{t\in I}\int_{a}^{b}\left[K_1(t,s)-K_2(t,s)\right]ds\cdot \frac{\theta(d(v,y))+ \theta(d(x,u))}{2}\leq 
$$
$$
\leq \frac{\theta(d(v,y))+ \theta(d(x,u))}{2}.
$$
 Now, since $\theta$ is non-increasing, we have
$$
\theta(d(x,u))\leq \theta(d(x,u)+d(v,y)),\,\theta(d(v,y))\leq \theta(d(x,u)+d(v,y))
$$
and so
$$
\frac{\theta(d(v,y))+ \theta(d(x,u))}{2}\leq \theta(d(x,u)+d(v,y))=
$$
$$
=\frac{d(v,y)+ d(x,u)}{2}-\psi\left(\frac{d(v,y)+ d(x,u)}{2}\right),
$$
by the definition of $\theta$. Thus we finally get
$$
\frac{d(F(x,y),F(u,v))+d(F(y,x),F(v,u))}{2}\leq 
$$
$$
=\frac{d(v,y)+ d(x,u)}{2}-\psi\left(\frac{d(v,y)+ d(x,u)}{2}\right).
$$
which is just the contractive condition \eqref{Bhas2} in Corollary \ref{cor1}.

Now, let $(\alpha, \beta)\in X^2$ be a coupled upper-lower solution of (\ref{3.1}). Then we have
$$
  \alpha(t)\leq F(\alpha(t), \beta(t)) \textnormal{ and } \beta(t)\geq F[\beta(t), \alpha(t)),  
$$
for all $t\in I$, which show that all hypotheses of Corollary \ref{cor1} are satisfied.\\
This proves that $F$ has a unique coupled fixed point $(\overline{x},\overline{y})$ in $X^2$.

Since $\alpha\leq \beta$, by Corollary \ref{cor2}  it follows that $\overline{x}=\overline{y}$, that is
$$
\overline{x}=F(\overline{x},\overline{x}),
$$
and therefore $\overline{x}\in C(I,\mathbb{R})$ is the unique solution of the integral equation \eqref{3.1}.
\end{proof}

\begin{remark}\em
Note that our Theorem \ref{th3.1} is more general than Theorem 3.3 in \cite{Luong} since, if $\lambda\neq \mu$, then
$$  
\lambda+\mu < 2 \max \{\lambda, \mu\}.
$$
For example, if in Assumption 3.1 we have $\lambda=\frac{1}{6}$, $\mu=\frac{1}{12}$ and\\ $\sup_{t\in I}\int_{a}^{b}\left[K_1(t,s)-K_2(t,s)\right]ds=4$, then our condition \eqref{3.4-1} holds:
$$
(\lambda+\mu)\cdot \sup_{t\in I}\int_{a}^{b}\left[K_1(t,s)-K_2(t,s)\right]ds =\frac{1}{4}\cdot 4\leq 1,  
$$
so Theorem \ref{th3.1} can be applied but, since
$$  
2\max\{\lambda,\mu\}\cdot \sup_{t\in I}\int_{a}^{b}\left[K_1(t,s)-K_2(t,s)\right]ds =2\cdot\frac{1}{6}\cdot 4=\frac{4}{3}> 1,  
$$
the corresponding condition $(iii)$ in \cite{Luong} does not hold and hence Theorem 3.3 in \cite{Luong} cannot be applied to obtain the existence and uniqueness of the solution of the integral equation \eqref{3.1}.
\end{remark}

\begin{remark}\em
As a final conclusion, we note that our results in this paper improve all coupled fixed point theorems in \cite{Bha}-\cite{Luong}, as well as the fixed point theorems in \cite{Aga}, \cite{Nie06}-\cite{Ran}, by considering a more general (symmetric) contractive condition. Note also that our technique of proof reveals that one can use the dual assumption \eqref{mare}  for the initial values $x_0,y_0$ in Theorem \ref{th3}. 
\end{remark}

\vskip 0.5 cm {\it 

Department of Mathematics and Computer Science

North University of Baia Mare

Victoriei 76, 430122 Baia Mare ROMANIA

E-mail: vberinde@ubm.ro}
\end{document}